\newtheorem{Theorem}{Theorem}[section]
\newtheorem{Lemma}[Theorem]{Lemma}
\newtheorem{Proposition}[Theorem]{Proposition}
\newtheorem{Remark}[Theorem]{Remark}
\begin{document}
\title{On the Distinguishing number of Functigraphs }
\author{Muhammad Fazil, Muhammad Murtaza, Usman Ali, Imran Javaid}
\keywords{distinguishing number, functigraph.\\
2010 {\it Mathematics Subject Classification.} 05C15\\
$^*$ Corresponding author: mfazil@bzu.edu.pk}

\address{Centre for advanced studies in Pure and Applied Mathematics,
Bahauddin Zakariya University Multan, Pakistan
\newline Email: mfazil@bzu.edu.pk, mahru830@gmail.com,
uali@bzu.edu.pk, imran.javaid@bzu.edu.pk}

\date{}
\maketitle
\begin{abstract} Let $G_{1}$ and $G_{2}$ be disjoint copies of
a graph $G$, and let $g:V(G_{1})\rightarrow V(G_{2})$ be a function.
A functigraph $F_{G}$ consists of the vertex set $V(G_{1})\cup
V(G_{2})$ and the edge set $E(G_{1})\cup E(G_{2})\cup
\{uv:g(u)=v\}$. In this paper, we extend the study of the
distinguishing number of a graph to its functigraph. We discuss the
behavior of the distinguishing number in passing from $G$ to $F_{G}$
and find its sharp lower and upper bounds. We also discuss the
distinguishing number of functigraphs of complete graphs and join
graphs.
\end{abstract}

\section{Preliminaries}
Given a key ring of apparently identical keys to open different
doors, how many colors are needed to identify them? This puzzle was
given by Rubin \cite{frank rubin} in 1980 for the first time. In
this puzzle, there is no need for coloring to be proper. Indeed, one
cannot find a reason why adjacent keys must be assigned different
colors, whereas in other problems like storing chemicals, scheduling
meetings a proper coloring is needed, and one with a small number of
colors is required.


From the inspiration of this puzzle, Albertson and Collins
\cite{alb} introduced the concept of the distinguishing number of a
graph as follows: A labeling $f: V(G)\rightarrow \{1,2,3,...,t\}$ is
called a $t$-\emph{distinguishing} if no non-trivial automorphism of
a graph $G$ preserves the vertex labels. The \emph{distinguishing
number} of a graph $G$, denoted by $Dist(G)$, is the least integer
$t$ such that $G$ has $t$-distinguishing labeling. For example, the
distinguishing number of a complete graph $K_n$ is $n$, the
distinguishing number of a path $P_n$ is $2$ and the distinguishing
number of a cycle $C_{n},\ n\geq 6$ is $2$. For a graph $G$ of order
$n$, $1\le Dist(G)\le n$ \cite{alb}. If $H$ is a subgraph of a graph
$G$ such that automorphism group of $H$ is a subset of automorphism
group of $G$, then $Dist(H)\le Dist(G)$.


Harary \cite{har} gave different methods (orienting some of the
edges, coloring some of the vertices with one or more colors and
same for the edges, labeling vertices or edges, adding or deleting
vertices or edges) of destroying the symmetries of a graph. Collins
and Trenk defined the distinguishing chromatic number in \cite{col}
where they used proper $t$-distinguishing for vertex labeling. They
have also given a comparison between the distinguishing number, the
distinguishing chromatic number and the chromatic number for
families like complete graphs, paths, cycles, Petersen graph and
trees etc. Kalinowski and Pilsniak \cite{Pil} have defined similar
graph parameters, the distinguishing index and the distinguishing
chromatic index, they labeled edges instead of vertices. They have
also given a comparison between the distinguishing number and the
distinguishing index for a connected graph $G$ of order $n\ge 3$.
Boutin \cite{bou} introduced the concept of determining sets. In
\cite{albrt+bou+2}, Albertson and Boutin proved that a graph is
$t$-distinguishable if and only if it has a determining set that is
$(t-1)$-distinguishable. They also proved that every Kneser graph
$K_{n:k}$ with $n\ge 6$ and $k\ge 2$ is 2-distinguishable. A
considerable literature has been developed in this area see
\cite{albertson,albrt+bou+1,
bog,mch1,mch2,cheng,kla1,kla2,tymoczko}.

Unless otherwise specified, all the graphs $G$ considered in this
paper are simple, non-trivial and connected. 
The \emph{open neighborhood} of a vertex $u$ of $G$ is $N(u)=\{v\in
V(G):uv\in E(G)\}$ and the \emph{closed neighborhood} of $u$ is
$N(u)\cup \{u\}$. Two vertices $u,v$ are \emph{adjacent twins} if
$N[u]=N[v]$ and \emph{non adjacent twins} if $N(u)=N(v)$. If $u,v$
are adjacent or non adjacent twins, then $u,v$ are \emph{twins}. A
set of vertices is called \emph{twin-set} if every of its two
vertices are twins. A graph $H$ is said to be a \emph{subgraph} of a
graph $G$ if $V(H) \subseteq V(G)$ and $E(H) \subseteq E(G)$. Let
$S\subset V(G)$ be any subset of vertices of $G$. The \emph{induced
subgraph}, denoted by $<S>$, is the graph whose vertex set is $S$
and whose edge set is the set of all those edges in $E(G)$ which
have both end vertices in $S$.


The idea of permutation graph was introduced by Chartrand and Harary
\cite{char} for the first time. They defined the permutation graph
as follows: a permutation graph consists of two identical disjoint
copies of a graph $G$, say $G_1$ and $G_2$, along with $|V(G)|$
additional edges joining $V(G_1)$ and $V(G_2)$ according to a given
permutation on $\{1, 2, . . . , |V(G)|\}$. Dorfler \cite{dor},
introduced a mapping graph which consists of two disjoint identical
copies of graph where the edges between the two vertex sets are
specified by a function. The mapping graph was rediscovered and
studied by Chen et al. \cite{yi}, where it was called the
functigraph. A functigraph is an extension of permutation graph.
Formally the functigraph is defined as follows: Let $G_{1}$ and
$G_{2}$ be disjoint copies of a connected graph $G$, and let
$g:V(G_{1})\rightarrow V(G_{2})$ be a function. A \emph{functigraph}
$F_{G}$ of a graph $G$ consists of the vertex set $V(G_{1})\cup
V(G_{2})$ and the edge set $E(G_{1})\cup E(G_{2})\cup
\{uv:g(u)=v\}$. Linda et al. \cite {kang,kang1} and Kang et al.
\cite {kang2} have studied the functigraph for some graph invariants
like metric dimension, domination and zero forcing number. In
\cite{fazil2}, we have studied the fixing number of functigraph. The
aim of this paper is to study the distinguishing number of
functigraph.

Throughout the paper, we will denote the set of all automorphisms of
a graph $G$ by $\Gamma(G)$, the functigraph of $G$ by $F_{G}$,
$V(G_{1})=A$, $V(G_{2})=B$, $g:A\rightarrow B$ is a function,
$g(V(G_{1}))= I$, $|g(V(G_{1}))|= |I|= s$.

This paper is organized as follows. In Section 2, we give sharp
lower and upper bounds for distinguishing number of functigraph.
This section also establishes the connections between the
distinguishing number of graphs and their corresponding functigraphs
in the form of realizable results. In Section 3, we provide the
distinguishing number of functigraphs of complete graphs and join of
path graphs. Some useful results related to these families have also
been presented in this section.

\section{Bounds and some realizable results}

 The sharp lower and upper bounds on the distinguishing number of
functigraphs are given in the following result.
\begin{Proposition} \label{f5}Let $G$ be a connected graph of order $n\geq 2$, then $$1\leq
Dist(F_{G})\leq Dist(G)+1.$$ Both bounds are sharp.
\end{Proposition}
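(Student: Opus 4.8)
The plan is to establish the two inequalities separately and then produce extremal examples. The lower bound $1\le Dist(F_G)$ needs no argument, since every graph has distinguishing number at least $1$; to see that it is sharp it suffices to produce a single functigraph with trivial automorphism group, which is possible once $2n\ge 6$. For instance, take $G=P_3$, write $G_1$ as the path $a_1a_2a_3$ and $G_2$ as the path $b_1b_2b_3$, and let $g$ send $a_1,a_2\mapsto b_2$ and $a_3\mapsto b_3$. In $F_G$ the degrees are $1,3,4$ at $b_1,a_2,b_2$ (these three vertices are therefore fixed by every automorphism) and $2$ at each of $a_1,a_3,b_3$; among the latter three, $a_1$ is the only one adjacent to two already-fixed vertices, so it is fixed too, and then $a_3,b_3$ cannot be interchanged because their second neighbours $a_2$ and $b_2$ are distinct fixed vertices. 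Hence $\Gamma(F_G)$ is trivial and $Dist(F_G)=1$.

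For the upper bound I would exhibit a $(Dist(G)+1)$-distinguishing labeling of $F_G$ explicitly. Put $k=Dist(G)$ and fix a $k$-distinguishing labeling $c$ of $G$; via the identifications $\langle A\rangle\cong G\cong\langle B\rangle$ this yields distinguishing labelings $c_1$ of $G_1$ and $c_2$ of $G_2$, both using the colours $\{1,\dots,k\}$. Define a labeling $f$ of $F_G$ by $f=c_1$ on $A$ and $f=c_2$ on $B$, and then re-label one chosen vertex $w\in A$ with the new colour $k+1$; this uses $k+1$ colours in all. It remains to check that $f$ admits no non-trivial colour-preserving automorphism.

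So let $\phi\in\Gamma(F_G)$ preserve $f$. Since $w$ is the only vertex coloured $k+1$, we get $\phi(w)=w$. The crucial point is then to show that $\phi$ respects the partition $V(F_G)=A\cup B$. Granting this, the proof ends immediately: $\phi|_{G_1}$ is an automorphism of $G_1$ that preserves the labeling $f|_A$, which refines the distinguishing labeling $c_1$ (it is $c_1$ with $w$ split off into its own class), so $\phi|_{G_1}=\mathrm{id}$; similarly $\phi|_{G_2}$ preserves $c_2$ and so is the identity; hence $\phi=\mathrm{id}$, and $Dist(F_G)\le k+1$.

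The main obstacle is exactly the claim that $\phi(A)=A$, because a functigraph automorphism need not preserve the two copies — e.g. for $G=K_2$ with $g=\mathrm{id}$ the functigraph is $C_4$, whose quarter-turn mixes $A$ and $B$. I would handle this by a case analysis starting from the fixed vertex $w$: if $\phi$ carried a vertex of $A$ into $B$, it would also carry some vertex of $B$ into $A$, and one then plays off the structural asymmetry of $F_G$ (each $u\in A$ has exactly one cross-edge, to $g(u)$, while $v\in B$ has $|g^{-1}(v)|$ of them) against the connectedness of $\langle A\rangle$ and $\langle B\rangle$ and the fact that only $A$ carries the colour $k+1$, propagating a contradiction along paths out of $w$. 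I expect this bookkeeping, not a single new idea, to be the bulk of the work. Finally, sharpness of the upper bound is again witnessed by $G=K_2$ with $g=\mathrm{id}$: there $F_G=C_4$, and $Dist(C_4)=3=Dist(K_2)+1$, so the bound is attained.
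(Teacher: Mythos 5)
Your lower bound, its sharpness example ($P_3$ with a non-injective $g$; your rigidity check is correct), and your upper-bound sharpness example ($K_2$ with $g=\mathrm{id}$ giving $C_4$) are all fine and in the same spirit as the paper. The problem is the upper bound itself. You explicitly defer the crucial step -- that every $f$-preserving automorphism $\phi$ of $F_G$ satisfies $\phi(A)=A$ -- to unspecified ``bookkeeping,'' and in fact that claim is false for the labeling you propose, so no amount of bookkeeping will close the gap. Take $G=P_5$ with $V(G_1)=\{a_1,\dots,a_5\}$, $V(G_2)=\{b_1,\dots,b_5\}$ (paths $a_1a_2a_3a_4a_5$ and $b_1b_2b_3b_4b_5$) and $g(a_1)=b_2$, $g(a_2)=b_1$, $g(a_3)=b_3$, $g(a_4)=b_5$, $g(a_5)=b_4$. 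Then $N(a_1)=\{a_2,b_2\}=N(b_1)$ and $N(a_5)=\{a_4,b_4\}=N(b_5)$, so $\{a_1,b_1\}$ and $\{a_5,b_5\}$ are two disjoint twin pairs, and the transpositions $(a_1\,b_1)$ and $(a_5\,b_5)$ are automorphisms of $F_G$. Your labeling copies the same $c$ to both sides, so $f(a_1)=f(b_1)$ and $f(a_5)=f(b_5)$, and recoloring a single vertex $w\in A$ with the new colour $k+1$ kills at most one of these two transpositions; the other is a non-trivial colour-preserving automorphism fixing $w$ that sends a vertex of $A$ into $B$. Thus your $f$ uses $Dist(G)+1=3$ colours but is not distinguishing, and the lemma you planned to prove by case analysis is simply not true for this scheme (independently of how $w$ is chosen).

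For comparison, the paper does not isolate a special vertex at all: it copies a $Dist(G)$-labeling to both copies ($f(u_i)=f(v_i)$), splits into the cases $g$ non-bijective and $g$ bijective, and in the bijective case argues that only the flip of $G_1$ and $G_2$ can survive, so one additional colour suffices. (That argument is itself rather terse -- the example above shows that automorphisms other than the global flip can survive the copied labeling, so the extra colour has to be deployed with some care -- but it is a different route from yours: it never relies on the claim that colour-preserving automorphisms must preserve the partition $A\cup B$ after recoloring one vertex.) To repair your approach you would need either to allow the two copies to carry different distinguishing labelings chosen against each other, or to use the extra colour on more than one vertex pattern, and in either case to actually prove the preservation statement rather than assert it; as written, the upper-bound proof does not go through.
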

\begin{proof} Obviously, $1\leq Dist(F_{G})$ by definition. Let $Dist(G)=t$
 and $f$ be a $t$-distinguishing labeling for graph $G$. Also, let $u_i\in A$ and
 $v_i\in B$, $1\le i \le n$. We extend labeling
 $f$ to $F_G$ as: $f(u_i)=f(v_i)$ for all $1\le i \le n$. We have following two cases for
 $g$:
 \begin{enumerate}
    \item If $g$ is not bijective, then $f$ as defined earlier is a
    $t$-distinguishing labeling for $F_G$. Hence, $Dist(F_G)\le t$.
    \item If $g$ is bijective, then $f$ as defined earlier
    destroys all non-trivial automorphisms of $F_G$ except the flipping
     of $G_1$ and $G_2$ in $F_G$, for some choices
    of $g$. Thus, $Dist(F_G)\le t+1$.
    \end{enumerate}
For the sharpness of the lower bound, take $G=P_{3}$ and
$g:A\rightarrow B$, be a function such that $g(u_{i})= v_{1}, i=1,2$
and $g(u_{3})= v_{3}$. For the sharpness of the upper bound, take
$G$ as rigid graph and $g$ as identity function.
\end{proof}
Since at least $m$ colors are required to break all automorphisms of
a twin set of cardinality $m$, so we have the following corollary.
\begin{Proposition}\label{rem3.3111}
Let $U_1,U_2,...,U_t$ be disjoint twin sets in a connected graph $G$
of order $n\geq 3$ and $m= max\{|U_i|: 1\leq i\leq t\}$,\\ (i)
$Dist(G)\ge m$.
\\ (ii) If $Dist(G)=m$, then $Dist(F_G)\le m$.
\end{Proposition}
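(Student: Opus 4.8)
For part (i), note that any twin set $U$ of $G$ is either a clique of pairwise true twins or an independent set of pairwise false twins, and in both cases every permutation of $U$, extended by the identity on $V(G)\setminus U$, belongs to $\Gamma(G)$. Hence in a distinguishing labeling of $G$ the vertices of $U$ must receive pairwise distinct colours, since a transposition of two equally coloured vertices of $U$ would be a non-trivial colour-preserving automorphism; thus $Dist(G)\ge |U|$, and taking $U=U_i$ with $|U_i|=m$ gives $Dist(G)\ge m$.

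For part (ii), assume $Dist(G)=m$ and fix an $m$-distinguishing labeling $f$ of $G$; by part (i) the $m$ vertices of a twin set $U$ with $|U|=m$ carry under $f$ all $m$ colours, each once. If $g$ is not bijective, then by case (1) in the proof of Proposition \ref{f5} the labeling of $F_G$ obtained by copying $f$ onto both $G_1$ and $G_2$ is already $m$-distinguishing, so $Dist(F_G)\le m$.

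The bijective case is the crux. Here I would keep $f$ on $G_1$ and exploit the latitude the twin set grants on $G_2$: while the copy $U_1\subset V(G_1)$ is forced to use all $m$ colours, on $G_2$ one may recolour the copy $U_2$ of $U$, e.g. by composing $f$ with a permutation $\sigma\in\mathrm{Sym}(U)\le\Gamma(G)$, which leaves $f\circ\sigma$ an $m$-distinguishing labeling of $G_2$. By the proof of Proposition \ref{f5} the only automorphisms of $F_G$ that such a labeling might fail to break are those interchanging the two copies; a short computation (using that the $\Gamma(G)$-stabiliser of $f$ is trivial) shows that such an automorphism is then completely determined by $\sigma$ and is a genuine automorphism of $F_G$ only when $g$ satisfies a rigid relation with $\sigma$ (essentially $g\sigma g=\sigma^{-1}$). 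One then picks $\sigma$ so that this relation fails: $\sigma=\mathrm{id}$ serves unless $g$ is an involution, and for $g$ an involution a non-identity $\sigma$ works (for instance an $m$-cycle of $U$ when $m\ge 3$, since otherwise $g$ would centralise $\mathrm{Sym}(U)$), whence $Dist(F_G)\le m$.

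The main obstacle is exactly this bijective case, which is subtler than the sketch lets on. First, automorphisms of $F_G$ need not respect the partition $\{V(G_1),V(G_2)\}$ — this can fail already for $G=K_2$ — so one must justify that, once $G_1$ carries a distinguishing labeling, every surviving colour-preserving automorphism maps $V(G_1)$ onto $V(G_2)$; this is what is being invoked from Proposition \ref{f5}. Second, for small twin sets the twin-set twist alone may be insufficient: when $m=2$ the only non-trivial $\sigma$ is a transposition, with which $g$ may commute, and then no choice of $\sigma$ works — one must instead use on $G_2$ a labeling not of the form $f\circ(\text{automorphism})$, and indeed a labeling of $G_2$ that is not even distinguishing can make the whole $F_G$ distinguishable. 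Settling these residual configurations, and checking the exhaustiveness of the case split, is where the real work lies. (The statement also tacitly needs $m\ge 2$, since a rigid $G$ has $m=1$ but $Dist(F_G)=2$ for $g=\mathrm{id}$.)
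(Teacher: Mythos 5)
Your part (i) is correct and is essentially the paper's own argument: every permutation of a twin set extends by the identity to an automorphism of $G$, so any distinguishing labeling must give the vertices of a largest twin set pairwise distinct colours, whence $Dist(G)\ge m$. (Your preliminary observation that a twin set is either a clique of adjacent twins or an independent set of non-adjacent twins is true but not really needed; it suffices that transposing two twins is an automorphism.)

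Part (ii), however, is not actually proved, and you say so yourself. The non-bijective case is fine, since it is exactly case (1) of Proposition \ref{f5}. In the bijective case you only sketch the twin-twist idea: the relation you claim characterises a surviving copy-swapping automorphism (essentially $g\sigma g=\sigma^{-1}$) is not derived — one must allow arbitrary colour-preserving symmetries of the two labelled copies composed with $g$, not just the ``natural'' flip — and, as you concede, when $m=2$ the only available twist is a transposition with which $g$ may be compatible, so the method genuinely breaks down there and a different relabeling of $G_2$ (possibly a non-distinguishing one) would have to be produced and verified. As it stands, the bijective case of (ii) is an unproven claim, i.e. a real gap. It is only fair to add that the paper does not close this gap either: Proposition \ref{rem3.3111} is supported only by the one-sentence remark preceding it, which justifies (i) but gives no argument for (ii); and your observation that the statement must implicitly exclude trivial (singleton) twin sets — otherwise a rigid $G$ with $g$ the identity, the paper's own sharpness example for the upper bound in Proposition \ref{f5}, has $Dist(G)=m=1$ but $Dist(F_G)=2$ — is a legitimate criticism of the statement as written. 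In short: your (i) matches the paper; your (ii) correctly locates where the difficulty lies but does not resolve it, and neither does the paper.
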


\begin{Lemma}\label{Lemma g constant}
Let $G$ be a connected graph of order $n\ge 2$ and $g$ be a constant
function, then $Dist(F_G)=Dist(G)$.
\end{Lemma}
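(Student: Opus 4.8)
The plan is to prove the two inequalities $Dist(F_G)\le Dist(G)$ and $Dist(F_G)\ge Dist(G)$ separately. Let $g$ be constant, say $g(u)=v_1$ for all $u\in A$, where $v_1\in B$. For the upper bound, I would take a $Dist(G)$-distinguishing labeling $f$ of $G$ and transplant it to both copies, setting $f(u_i)=f(v_i)$ exactly as in the proof of Proposition~\ref{f5}. Since $g$ is constant, $g$ is certainly not bijective, so case (1) of Proposition~\ref{f5} applies directly and gives $Dist(F_G)\le Dist(G)$; in fact I would just cite that case rather than repeat the argument.

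**The lower bound.** For $Dist(F_G)\ge Dist(G)$, the key structural observation is that the subgraph of $F_G$ induced on $B=V(G_2)$ is a copy of $G$, and every automorphism of this copy extends to an automorphism of $F_G$. Concretely, if $\sigma\in\Gamma(G_2)$ fixes $v_1$ (the unique vertex of $B$ that receives all the functigraph edges), then extending $\sigma$ by the identity on $A$ is an automorphism of $F_G$: the edges within $G_1$ are preserved, the edges within $G_2$ are preserved by $\sigma$, and each edge $u_iv_1$ is sent to $u_i\sigma(v_1)=u_iv_1$. So $\Gamma(F_G)$ contains a subgroup isomorphic to the stabilizer $\Gamma(G)_{v_1}$ of a vertex in $G$. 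This stabilizer alone need not force $Dist(G)$ colors, so this subgroup argument is not quite enough on its own.

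**Closing the gap.** To get the full lower bound I would instead argue contrapositively at the level of labelings. Suppose $F_G$ has a $t$-distinguishing labeling $h$ with $t<Dist(G)$. Restrict $h$ to $B$; this is a labeling of $G_2\cong G$ with $t<Dist(G)$ colors, so by definition of $Dist(G)$ there is a non-trivial $\sigma\in\Gamma(G_2)$ preserving $h|_B$. The obstacle is that $\sigma$ might move $v_1$, so the naive extension fails to be an automorphism of $F_G$. The way around this is to exploit that the set $\{u_iv_1: u_i\in A\}$ gives $v_1$ a distinguished role — $v_1$ is adjacent to all of $A$ while no other vertex of $B$ is adjacent to any vertex of $A$ — hence $\deg_{F_G}(v_1)=\deg_G(v_1)+n$ while $\deg_{F_G}(v_j)=\deg_G(v_j)$ for $j\ne1$ and $\deg_{F_G}(u_i)=\deg_G(u_i)+1$. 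For $n\ge 2$ one checks these degree ranges force any automorphism of $F_G$ to map $B$ to $B$, $A$ to $A$, and to fix $v_1$; so in particular the restriction map $\Gamma(F_G)\to\Gamma(G_2)$ has image inside $\Gamma(G)_{v_1}$. This means I cannot simply lift an arbitrary symmetry of $G_2$. The correct route is therefore the subgroup route after all, but applied to the right copy: repeat the symmetric analysis on $A$ — the subgraph induced on $A$ is a copy of $G$, and every $\tau\in\Gamma(G_1)$ extends to $F_G$ by acting as $\tau$ on $A$ and the identity on $B$, since $g$ is constant the functigraph edges $u_iv_1$ are permuted among themselves ($u_iv_1\mapsto\tau(u_i)v_1$) and $G_2$ is fixed. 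Thus the full automorphism group $\Gamma(G_1)\cong\Gamma(G)$ embeds in $\Gamma(F_G)$, acting on $A$ exactly as $\Gamma(G)$ acts on $G$. Hence any distinguishing labeling of $F_G$ restricts to a distinguishing labeling of the $A$-copy of $G$, forcing at least $Dist(G)$ colors there, so $Dist(F_G)\ge Dist(G)$. Combining the two inequalities completes the proof; the main point to be careful about is verifying that the identity-on-$B$, $\tau$-on-$A$ map really is an automorphism, which is exactly where constancy of $g$ is used.
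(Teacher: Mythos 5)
Your final argument is correct and ends up close in spirit to the paper's proof, but the two halves are routed differently. The paper gets the lower bound by asserting $\Gamma(G)=\Gamma(\langle A\cup\{v\}\rangle)\subset\Gamma(F_G)$, so the vertices of $A\cup\{v\}$ already require $Dist(G)$ colors, and gets the upper bound by arguing that the vertices of $V(F_G)\setminus(A\cup\{v\})$ are not similar to any vertex of $A\cup\{v\}$ and can therefore reuse those colors. You prove the same lower bound but make the key step explicit and rigorous: every $\tau\in\Gamma(G_1)$ extended by the identity on $B$ is an automorphism of $F_G$ exactly because $g$ is constant (the cross edges $u_iv_1$ are permuted among themselves), so any distinguishing labeling of $F_G$ restricts to a distinguishing labeling of the $A$-copy of $G$ and hence uses at least $Dist(G)$ colors. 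For the upper bound you cite case (1) of Proposition~\ref{f5} (a constant function on $n\ge 2$ vertices is not bijective, so the doubled labeling is distinguishing), rather than reproducing the paper's ``dissimilarity'' argument; this is a legitimate shortcut within the paper and, if anything, cleaner than the paper's sketch, though of course it inherits whatever burden of proof that case of Proposition~\ref{f5} carries.

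One caveat about your middle paragraph: the claim that the degree counts force every automorphism of $F_G$ to map $A$ to $A$ and $B$ to $B$ is not justified, since the degree ranges $[\delta(G)+1,\Delta(G)+1]$ on $A$ and $[\delta(G),\Delta(G)]$ on $B\setminus\{v_1\}$ overlap; degrees alone only force $v_1$ to be fixed (its degree $\deg_G(v_1)+n$ exceeds every other degree). Fortunately this assertion is not load-bearing: its only role is to explain why lifting a symmetry of the $B$-copy fails, and your eventual proof uses only the $A$-side embedding, so the argument stands. You could simply delete that detour (and the false start through $B$) without loss.
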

\begin{proof}
Let $I=\{v\}\subset B$. Then $\Gamma(G)=\Gamma(< A\cup
\{v\}>)\subset \Gamma(F_G)$. Thus, vertices in $A\cup \{v\}$ are
labeled by $Dist(G)$ colors. Since $g$ is a constant function,
therefore all vertices in $V(F_G)\setminus \{A\cup\{v\}\}$ are not
similar to any vertex in $A\cup\{v\}$ in functigraph $F_G$.
Therefore, vertices in $V(F_G)\setminus\{A\cup\{v\}\}$ can also be
labeled from these $Dist(G)$ colors. Hence, $Dist(F_G)=Dist(G)$.

\end{proof}

\begin{Remark}\label{rem3.31115}
Let $G$ be a connected graph and $Dist(F_G)=m_1$ if $g$ is constant
and $Dist(F_G)=m_2$ if $g$ is not constant, then $m_1\geq m_2.$
\end{Remark}

A vertex $v$ of degree at least three in a connected graph $G$ is
called a \emph{major vertex}. Two paths rooted from the same major
vertex and having the same length are called the \emph{twin stems}.

We define a function $\psi: \mathbb{N} \setminus\{1\}\rightarrow
\mathbb{N}\setminus \{1\}$ as $\psi (m)=k$ where $k$ is the least
number such that $m\le 2{k\choose2}+k$. For example, $\psi(19)=5$.
Note that $\psi$ is well-defined.

\begin{Lemma}\label{LemmaTwinStem}
If a graph $G$ has $t\ge 2$ twin stems of length 2 rooted at same
major vertex, then $Dist(G)\ge \psi(t)$.
\end{Lemma}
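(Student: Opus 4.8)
The plan is to exploit the large automorphism group forced by the twin stems and to count the colour patterns available on a single stem. Write $v$ for the common major vertex and, for $1\le i\le t$, let $P_i$ be the $i$-th twin stem, a path $v\,a_i\,b_i$ in which $b_i$ is a leaf and $a_i$ has degree $2$ with $N(a_i)=\{v,b_i\}$; these degree conditions are exactly what it means for the $P_i$ to be stems, and they force the stems to be pairwise disjoint apart from the vertex $v$. The first step is to note that for every permutation $\sigma\in S_t$ the map $\phi_\sigma$ that fixes every vertex of $G$ outside $\bigcup_i\{a_i,b_i\}$ and sends $a_i\mapsto a_{\sigma(i)}$, $b_i\mapsto b_{\sigma(i)}$ belongs to $\Gamma(G)$: the only edges of $G$ incident with a moved vertex are those of the form $va_i$ or $a_ib_i$, and $\phi_\sigma$ permutes this set of edges among itself while fixing every other edge. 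In particular, for any two distinct indices $i\ne j$ the transposition-type automorphism $\phi_{(i\,j)}$ lies in $\Gamma(G)$.

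Next I would fix a $Dist(G)$-distinguishing labelling $f$ of $G$ and set $k=Dist(G)$. To each stem $P_i$ associate the ordered pair $c(i)=(f(a_i),f(b_i))\in\{1,\dots,k\}^2$. The key claim is that $c(1),\dots,c(t)$ are pairwise distinct: if $c(i)=c(j)$ for some $i\ne j$, then the non-trivial automorphism $\phi_{(i\,j)}$ preserves the labelling $f$, contradicting that $f$ is distinguishing. Hence $t\le |\{1,\dots,k\}^2|$, and splitting pairs according to whether their two entries agree gives $|\{1,\dots,k\}^2|=k+2{k\choose 2}$, so $t\le 2{k\choose 2}+k$. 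Since by definition $\psi(t)$ is the least integer $k$ with $t\le 2{k\choose 2}+k$, we conclude $Dist(G)=k\ge\psi(t)$.

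No step here is a serious obstacle; the only points that need care are checking that $\phi_\sigma$ genuinely preserves adjacency — which is precisely where the leaf and degree-$2$ structure built into the definition of a stem is used — and observing that it suffices to test a labelling against the transpositions $\phi_{(i\,j)}$ rather than against all of $S_t$. It is worth remarking that this is the length-$2$ case of the more general principle that $t$ twin stems of length $\ell$ would force $t\le k^{\ell}$, the exponent reflecting the $\ell$ vertices (other than $v$) carried by each stem; the slightly awkward-looking bound $2{k\choose 2}+k$ is simply the total number $k^2$ of ordered colour pairs, written so as to separate the monochromatic stems from the rest.
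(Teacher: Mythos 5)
Your proof is correct and rests on the same counting idea as the paper's: swapping two stems that carry the same ordered pair of colours is a colour-preserving automorphism, so any $k$-distinguishing labelling forces the $t$ ordered pairs to be distinct, giving $t\le 2{k\choose 2}+k$ and hence $Dist(G)\ge\psi(t)$. If anything, your version is tidier than the paper's, which detours through the induced subgraph on the stems and an explicit labelling (relevant only to an upper bound), whereas you exhibit the stem-permuting automorphisms of $G$ directly and then apply the pigeonhole count.
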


\begin{proof}
Let $x\in V(G)$ be a major vertex and $xu_iu'_i$ where $1\leq i\leq
t$ are twin stems of length 2 attach with $x$. Let
$H=<\{x,u_i,u'_i\}>$ and $k=\psi(t)$. We define a labeling $f:V(H)
\rightarrow \{1,2,...,k\}$ as: $$f(x)=k,$$
\begin{equation}\label{array1}
f({u_i})=\left\{
\begin{array}{ll}
1\,\,\,\,\,\,\,\,\,\,\,\,\,\,\,\,if \,\,\,\,\,\,\,\,\,\,1\leq i\,\leq k\\
2\,\,\,\,\,\,\,\,\,\,\,\,\,\,\,\,if \,\,\,\,\,\,\,\,\,\,k+1\leq i\,\leq 2k\\
3\,\,\,\,\,\,\,\,\,\,\,\,\,\,\,\,if \,\,\,\,\,\,\,\,\,\,2k+1\leq i\,\leq 3k\\
\vdots \,\,\,\,\,\,\,\,\,\,\,\,\,\,\,\,\,\,\, \,\,\,\,\,\,\,\,\,\,\,\,\,\,\,\,\,\,\,\vdots \\
k\,\,\,\,\,\,\,\,\,\,\,\,\,\,\,\,if \,\,\,\,\,\,\,\,\,\,(k-1)k+1\leq i\,\leq k^2\\
\end{array}
\right.
\end{equation}

\begin{equation}\label{array2}
f({u'_i})=\left\{
\begin{array}{ll}
i\, \mbox{mod} (k)\,\,\,\,\,\,\,if \,\,\,\,\,\,\,\,\,\, 1\le i\,\mbox{mod} (k) \le k-1, &  \\
k\,\,\,\,\,\,\,\,\,\,\,\,\,\,\,\,\,\,\,\,\,\,\,\,\,\,if \,\,\,\,\,\,\,\,\,\,\,\,\,\,\,\,\,\,\,\,\,  i\,\mbox{mod}(k) =0,&  \\

\end{array}
\right.
\end{equation}

Using this labeling, one can see that $f$ is a $t$-distinguishing
for $H$. Since permutations with repetition of $k$ colors, when 2 of
them are taken at a time is equal to $2{k\choose 2}+k$, therefore at
least $k$ colors are needed to label the vertices in $t$-stems.
Hence, $k$ is the least integer for which $G$ has $k$-distinguishing
labeling. Since $\Gamma(H)\subseteq \Gamma(G)$, therefore
$Dist(G)\ge \psi(t)$.
\end{proof}

\begin{figure}[h]
        \centerline
        {\includegraphics[width=10cm]{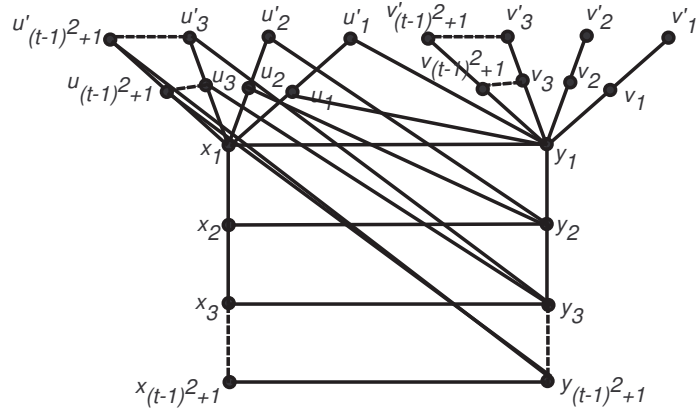}}
        \label{Fix3}
        \caption{Graph with $Dist(G)= t = Dist(F_{G}).$}\label{f1}
\end{figure}

\begin{Lemma}\label{fff} For any integer $t\geq 2$,
there exists a connected graph $G$ and a function $g$ such that
$Dist(G)=t=Dist(F_{G})$.
\end{Lemma}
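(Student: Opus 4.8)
The plan is to construct, for each $t \ge 2$, an explicit connected graph $G$ together with a function $g$ so that both $Dist(G) = t$ and $Dist(F_G) = t$. The natural source of a graph with prescribed distinguishing number is a gadget built from twin stems: by Lemma~\ref{LemmaTwinStem}, a major vertex carrying many twin stems of length $2$ forces a large distinguishing number, and conversely a careful count shows that $\psi(m)$ colors suffice to break those stems. So first I would take a major vertex $x$ and attach to it exactly $m$ twin stems of length $2$, where $m$ is chosen so that $\psi(m) = t$ (for instance $m = 2\binom{t}{2} + t$, the largest value with $\psi(m) = t$); this guarantees $Dist(G) \ge t$. To get equality $Dist(G) = t$ I would, if necessary, decorate $x$ or the rest of the graph with a small rigid "anchor" (a rigid graph attached at $x$ by an edge, as used in the sharpness argument of Proposition~\ref{f5}) whose only purpose is to kill automorphisms that move $x$ or permute the stem-bundle as a block, while not increasing the number of colours needed beyond $t$. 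One then checks that the labeling from Lemma~\ref{LemmaTwinStem} extended by distinct handling of the anchor is a $t$-distinguishing labeling, so $Dist(G) = t$.

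Next I would specify $g$. The cleanest choice is to make $g$ highly non-injective in a way that "pins down" the second copy: for example let $g$ be a constant function, or a function whose image $I = g(V(G_1))$ is small. If $g$ is constant, Lemma~\ref{Lemma g constant} immediately gives $Dist(F_G) = Dist(G) = t$, and we are done with essentially no further work. If for some reason a constant $g$ is considered degenerate and the authors want a "genuine" functigraph, I would instead pick $g$ so that $I$ is a single vertex's neighbourhood or otherwise has $|I| = s$ small, and argue directly: the edges $\{uv : g(u) = v\}$ break the flip symmetry between $G_1$ and $G_2$ (so the $t+1$ of Proposition~\ref{f5} case~(2) is not needed), while the twin stems inside $G_1$ still require $t = \psi(m)$ colours, so $Dist(F_G) \ge t$; and the extension of a $t$-distinguishing labeling of $G$ to $F_G$ as in the proof of Proposition~\ref{f5}, case~(1), gives $Dist(F_G) \le t$.

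Putting these together yields $t \le Dist(F_G) \le t$ and hence $Dist(F_G) = t = Dist(G)$, completing the lemma; Figure~\ref{f1} is presumably the intended picture of this $G$ and $F_G$. I expect the main obstacle to be the lower bound $Dist(F_G) \ge t$ done honestly: one must verify that no automorphism of $F_G$ can permute the $m$ twin stems in a way that effectively reduces the number of required colours — in particular that the added functigraph edges and the second copy $G_2$ do not create new symmetries that merge stem-vertices with other vertices, nor destroy the stem structure that forces $\psi(m)$ colours. The remedy is exactly the rigid anchor at $x$ together with a small/constant image $I$: the anchor fixes $x$ and the local stem bundle setwise, so every automorphism of $F_G$ restricts to an automorphism of the bundle $H = \langle \{x, u_i, u_i'\}\rangle$, and the counting argument of Lemma~\ref{LemmaTwinStem} applies verbatim. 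The remaining steps — checking that the explicit labeling is $t$-distinguishing on all of $F_G$, and that the anchor and $g$ do not force a $(t+1)$-st colour — are routine case analysis of the kind already carried out in Proposition~\ref{f5} and Lemma~\ref{Lemma g constant}.
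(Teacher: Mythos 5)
Your proposal is correct, but it reaches the conclusion by a genuinely different (and shorter) route than the paper. The paper builds a specific graph --- a path $P_{(t-1)^2+1}$ with $(t-1)^2+1$ twin stems of length $2$ attached at the end vertex $x_1$ --- shows $Dist(G)=t$ by the permutation-with-repetition count, and then chooses a \emph{non-constant} $g$ that sends each stem pair $u_i,u_i'$ onto the path vertex $y_i$ of the second copy; the point is that the stems of $G_2$ receive no function edges, so the only surviving symmetries of $F_G$ are the $(t-1)^2+1$ twin stems at $y_1$, forcing $Dist(F_G)=t$ directly. You instead take a spider with $m=t^2$ twin stems (so $Dist(G)=\psi(m)=t$; your rigid anchor is in fact unnecessary, since the center is already fixed by degree) and then make $g$ constant, so Lemma~\ref{Lemma g constant} gives $Dist(F_G)=Dist(G)=t$ with no further work. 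Since the statement only asks for \emph{some} function $g$, this is a valid and more economical argument; what the paper's heavier construction buys is a non-degenerate $g$ (with a figure) whose template is reused immediately afterwards for Lemmas~\ref{fffw1} and~\ref{fffw11}, where constant $g$ would not give the desired gaps. One small caution about your hedged non-constant variant: the lower bound there should come from the twin stems of the copy $G_2$ that receive no edges of $\{uv:g(u)=v\}$ (as in the paper), not from the stems ``inside $G_1$'', since the function edges emanate from $G_1$ and, depending on $g$, can break the symmetry among those stems; but this does not affect your main, constant-$g$ argument.
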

\begin{proof} Construct the graph $G$ as follows: let $P_{(t-1)^{2}+1}: x_1x_2x_3...x_{(t-1)^{2}+1}$ be a path. Join
$(t-1)^{2}+1$ twin stems $x_1u_iu'_i$ where $1\leq i\leq
(t-1)^{2}+1$ each of length two with vertex $x_1$ of
$P_{(t-1)^{2}+1}$. This completes construction of $G$. We first show
that $Dist(G)=t$. For $t=2$, we have two twin stems attach with
$x_1$, and hence $Dist(G)=2$. For $t\ge 3$, we define a
labeling $f: V(G)\rightarrow \{1,2,3,...,t\}$ as follows:\\
$f(x_i)=t$, for all $i$, where $1\leq i \leq(t-1)^{2}+1.$
$$f({u_i})=\left\{
\begin{array}{ll}
1\,\,\,\,\,\,\,\,\,\,\,\,\,\,\,\,\,\,\,if \,\,\,\, 1\leq i\leq t-1, &  \\
2\,\,\,\,\,\,\,\,\,\,\,\,\,\,\,\,\,\,\,if \,\,\,\, t\leq i\leq 2(t-1),&  \\
3\,\,\,\,\,\,\,\,\,\,\,\,\,\,\,\,\,\,\,if \,\,\,\, 2t-1\leq i\leq
3(t-1),&  \\
\vdots \,\,\,\,\,\,\,\,\,\,\,\,\,\,\,\,\,\,\, \,\,\,\,\,\,\,\,\,\,\,\,\,\,\,\,\,\,\,\vdots \\
t-1\,\,\,\,\,\,\,\,\,\,if \,\,\,\, (t-1)(t-2)+1\leq i\leq
(t-1)^{2}, &  \\
t\,\,\,\,\,\,\,\,\,\,\,\,\,\,\,\,\,\,\,if \,\,\,\, i=(t-1)^{2}+1.
\end{array}
\right.$$

$$f({u'_i})=\left\{
\begin{array}{ll}
i\,\ \mbox{mod}(t-1)\,\,\,\,\,\,\,\,\,\,\,\,\,\,\,\,\,\,\,if \,\,\,\,1\leq i\,\ \mbox{mod}(t-1)\leq t-2\,\,\,\, and \,\,\,\, i\ne(t-1)^{2}+1,&  \\
t-1\,\,\,\,\,\,\,\,\,\,\,\,\,\,\,\,\,\,\,\,\,\,\,\,\,\,\,\,\,\,\,\,\,\,\,\,\,\,\,\,if \,\,\,\,i\,\ \mbox{mod}(t-1)=0,&  \\
t\,\,\,\,\,\,\,\,\,\,\,\,\,\,\,\,\,\,\,\hskip 2.2cm if \,\,\,\,
i=(t-1)^{2}+1.
\end{array}
\right.$$

Using this labeling, one can see the unique automorphism preserving
this labeling is the identity automorphism. Hence, $f$ is a
$t$-distinguishing. Since permutation with repetition of $t-1$
colors, when 2 of them are taken at a time is $2{{t-1}\choose
2}+(t-1)$, therefore $(t-1)^2+1$ twin stems can be labeled by at
least $t$-colors. Hence, $t$ is the least integer such that $G$ has
$t$-distinguishing labeling. Now, we denote the corresponding
vertices of $G_2$ as $v_{i}, v_{i}', y_i$ for all $i$, where $1\leq
i \leq(t-1)^{2}+1$ and construct a functigraph $F_{G}$ by defining
$g:V(G_1)\rightarrow V(G_2)$ as follows: $g(u_i)=g(u'_i)=y_i$, for
all $i$, where $1\leq i \leq (t-1)^{2}+1$ and $g(x_i)= g(y_i)$, for
all $i$, where $1\leq i \leq (t-1)^{2}+1$ as shown in the Figure
\ref{f1}. Thus, $F_G$ has only symmetries of $(t-1)^{2}+1$ twin
stems attach with $y_1$. Hence, $Dist(F_G)= t.$
\end{proof}

Consider an integer $t\geq 4$. We construct graph $G$ similarly as
in proof of Lemma \ref{fff} by taking a path
$P_{(t-3)^{2}+1}:x_1x_2...x_{(t-3)^{2}+1}$ and attach $(t-3)^{2}+1$
twin stems $x_1u_iu'_i$ where $1\leq i\leq (t-3)^{2}+1$ with any one
of its end vertex say $x_1$. Using similar labeling and arguments as
in proof of Lemma \ref{fff} one can see that $f$ is $t-2$
distinguishing and $t-2$ is least integer such that $G$ has $t-2$
distinguishing labeling. Define functigraph $F_{G}$, where $g:
V(G_1)\rightarrow V(G_2)$ is defined by: $g(u_i)= g(u'_i)=y_i$, for
all $i$, where $1\leq i \leq (t-3)^{2}+1$, $g(x_i)=v_i$, for all
$i$, where $1\leq i \leq (t-3)^{2}-1$, $g(x_i)=y_i$, for all $i$,
where $(t-3)^{2}\leq i \leq (t-3)^{2}+1$. From this construction,
$F_G$ has only symmetries of $2$ twin stems attach with $y_1$, and
hence $Dist(F_G)=2$. Thus, we have the following result which shows
that $Dist(G)+Dist(F_{G})$ can be arbitrary large:
\begin{Lemma}\label{fffw1}
For any integer $t\geq 4$, there exists a connected graph $G$ and a
function $g$ such that $Dist(G)+Dist(F_{G})=t$.
\end{Lemma}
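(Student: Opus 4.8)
The plan is to exhibit, for each $t\ge 4$, a single graph $G$ together with a function $g$ realizing the splitting $t=(t-2)+2$: a graph with $Dist(G)=t-2$ whose functigraph, under a carefully chosen $g$, retains almost no symmetry, so that $Dist(F_G)=2$. This is precisely the construction sketched just before the statement, and the substance of the proof is to verify these two equalities.

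First, the base graph. Let $G$ be obtained from a path $P_{(t-3)^2+1}:x_1x_2\cdots x_{(t-3)^2+1}$ by attaching $(t-3)^2+1$ twin stems $x_1u_iu_i'$ of length $2$ at $x_1$. Since $t\ge 4$, $G$ has at least two such twin stems at the major vertex $x_1$, so Lemma \ref{LemmaTwinStem} gives $Dist(G)\ge \psi((t-3)^2+1)$. From the definition of $\psi$ one checks that $\psi((t-3)^2+1)=t-2$: on the one hand $(t-3)^2+1\le (t-2)^2=2\binom{t-2}{2}+(t-2)$ because $2(t-3)\ge 0$; on the other hand $(t-3)^2+1>(t-3)^2=2\binom{t-3}{2}+(t-3)$, so $t-2$ is the least admissible value. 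For the reverse bound I would reuse, with $t$ replaced by $t-2$, the explicit labeling from the proof of Lemma \ref{fff}: colour every path vertex $t-2$, spread the colours $1,\dots,t-2$ over the $u_i$ in consecutive blocks of size $t-3$, and set $f(u_i')\equiv i\pmod{t-3}$ (residue $0$ read as $t-3$, with the last stem getting colour $t-2$). The rigidity of the path fixes all $x_i$, and the ordered colour pairs $(f(u_i),f(u_i'))$ are pairwise distinct, which forbids any nontrivial permutation of the stems; hence $f$ is $(t-2)$-distinguishing and $Dist(G)=t-2$.

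Next, the functigraph. Take disjoint copies $G_1$ (vertices $x_i,u_i,u_i'$) and $G_2$ (vertices $y_i,v_i,v_i'$), and define $g:V(G_1)\to V(G_2)$ by $g(u_i)=g(u_i')=y_i$ for all $i$, $g(x_i)=v_i$ for $1\le i\le (t-3)^2-1$, and $g(x_i)=y_i$ for $(t-3)^2\le i\le (t-3)^2+1$. The goal is to show that $\Gamma(F_G)$ consists only of the automorphisms permuting the two twin stems of the $G_2$-copy rooted at $y_1$ that survive in $F_G$. One argues in stages: (a) no automorphism can interchange $A$ and $B$, because the $g$-edges are incident to the two sides in completely different patterns — each vertex of $A$ meets exactly one $g$-edge while several vertices of $B$ meet three — so a short degree and neighbourhood comparison (for instance between $x_1$ and the vertices of $B$ of equal degree) rules out any cross-side image; (b) an automorphism fixing $A$ and $B$ setwise must fix each path pointwise and, since the added $g$-edges now attach the $u_i,u_i'$ and the $x_i$ in mutually distinguishable ways, must fix all of $G_1$ and all path and stem vertices of $G_2$ except possibly the two twin stems at $y_1$ that $g$ leaves symmetric; (c) those two stems are genuinely interchangeable in $F_G$. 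Therefore $\Gamma(F_G)\cong\mathbb{Z}_2$, acting only on those two length-$2$ stems, so two colours are necessary and sufficient and $Dist(F_G)=2$.

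Combining the two parts, $Dist(G)+Dist(F_G)=(t-2)+2=t$, as required. The main obstacle is steps (a)–(b): proving that this specific $g$ destroys every symmetry of $F_G$ except the single stem-swap. This is not conceptually deep, but it demands careful bookkeeping of degrees and of which vertices are hit by $g$-edges, and the precise index ranges in the definition of $g$ — in particular the choice to send exactly the last two path vertices to $y_i$ rather than to $v_i$ — are exactly what makes the count of surviving twin stems equal to two.
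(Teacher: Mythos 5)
Your construction is exactly the one the paper uses (path $P_{(t-3)^2+1}$ with $(t-3)^2+1$ twin stems at $x_1$, the same choice of $g$ sending the last two path vertices to $y_i$ and all stem vertices to the $y_i$), and your two claims $Dist(G)=t-2$ and $Dist(F_G)=2$ are precisely the paper's, verified to essentially the same (slightly greater) level of detail. The proposal is correct and takes the same approach as the paper.
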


Consider $t\geq 3$. We construct graph $G$ similarly as in proof of
Lemma \ref{fff} by taking a path $P_{4(t-1)^{2}+1}$:
$x_1x_2...x_{4(t-1)^{2}+1}$ and attach $4(t-1)^{2}+1$ twin stems
$x_1u_iu'_i$, where $1\leq i\leq 4(t-1)^{2}+1$ with $x_1$. Using
similar labeling and arguments as in proof of Lemma \ref{fff} one
can see that $f$ is $2t-1$ distinguishing and $2t-1$ is the least
integer such that $G$ has $2t-1$ distinguishing labeling. Let us now
define $g$ as $g(u_i)= g(u'_i)=y_i$, for all $i$, where $1\leq i\leq
4(t-1)^{2}+1$, $g(x_i)= v_i$, for all $i$, where $1\leq i\leq
3t^{2}-4t$ and $g(x_i)= y_i$, for all $i$, where $3t^{2}-4t+1\leq
i\leq 4(t-1)^{2}+1.$ Thus, $F_G$ has only symmetries of
$(t-2)^{2}+1$ twin stems attach with $y_1$, and hence
$Dist(F_G)=t-1$. After making this type of construction, we have the
following result which shows that $Dist(G)-Dist(F_{G})$ can be
arbitrary large:
\begin{Lemma}\label{fffw11}
For any integer $t\geq 3$, there exists a connected graph $G$ and a
function $g$ such that $Dist(G)-Dist(F_{G})=t$.
\end{Lemma}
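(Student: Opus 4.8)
The plan is to flesh out in detail the construction described in the paragraph immediately before the statement. Fix an integer $t\ge 3$ and let $G$ be obtained from the path $P_{4(t-1)^{2}+1}\colon x_{1}x_{2}\cdots x_{4(t-1)^{2}+1}$ by attaching at the end vertex $x_{1}$ a family of $4(t-1)^{2}+1$ twin stems $x_{1}u_{i}u'_{i}$ of length $2$. Set $N=4(t-1)^{2}+1=(2t-2)^{2}+1$. The first task is to show $Dist(G)=2t-1$. The lower bound follows from Lemma \ref{LemmaTwinStem}, since $x_{1}$ is a major vertex carrying $(2t-2)^{2}+1$ twin stems of length $2$ and $(2t-2)^{2}<(2t-2)^{2}+1\le (2t-1)^{2}=2\binom{2t-1}{2}+(2t-1)$, so $\psi\big((2t-2)^{2}+1\big)=2t-1$. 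The matching upper bound comes from reusing verbatim the labeling in the proof of Lemma \ref{fff} with the parameter $t$ there replaced by $2t-1$: color every $x_{i}$ with $2t-1$, split the $u_{i}$ into consecutive blocks of size $2t-2$ and color the $j$-th block with color $j$, and color $u'_{i}$ by $i$ mod $(2t-2)$ (with color $2t-1$ when the residue is $0$); the only color-preserving automorphism is the identity, so $Dist(G)\le 2t-1$.

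Next I would form the functigraph $F_{G}$. Denote by $y_{i},v_{i},v'_{i}$ the copies in $G_{2}$ of $x_{i},u_{i},u'_{i}$, and define $g\colon V(G_{1})\to V(G_{2})$ by $g(u_{i})=g(u'_{i})=y_{i}$ for all $i$, by $g(x_{i})=v_{i}$ for $1\le i\le 3t^{2}-4t$, and by $g(x_{i})=y_{i}$ for $3t^{2}-4t+1\le i\le N$; the values are chosen so that this last index range contains exactly $N-(3t^{2}-4t)=(t-2)^{2}+1$ elements. The crux is the following claim: $\Gamma(F_{G})$ is precisely the group of automorphisms that permute, as whole units, the $(t-2)^{2}+1$ twin stems $y_{1}v_{i}v'_{i}$ with $i\ge 3t^{2}-4t+1$ and fix every other vertex of $F_{G}$. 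Granting the claim, applying Lemma \ref{LemmaTwinStem} to these $(t-2)^{2}+1$ twin stems at the major vertex $y_{1}$ gives $Dist(F_{G})\ge \psi\big((t-2)^{2}+1\big)=t-1$ (the computation of $\psi$ being as above, using $(t-2)^{2}<(t-2)^{2}+1\le (t-1)^{2}$), while coloring these free stems by the scheme of Lemma \ref{fff} with $t$ replaced by $t-1$ and coloring every other vertex with one fixed color produces a $(t-1)$-distinguishing labeling of $F_{G}$; hence $Dist(F_{G})=t-1$ and $Dist(G)-Dist(F_{G})=(2t-1)-(t-1)=t$, as desired.

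The step I expect to be the main obstacle is the verification of the claim about $\Gamma(F_{G})$: that this particular $g$ destroys every symmetry of $G_{1}\cup G_{2}$ except the permutations of the $(t-2)^{2}+1$ free stems. The intended route is a degree/distance analysis. Since $g$ is not injective, the flip exchanging $G_{1}$ and $G_{2}$ is already excluded (as in the proof of Proposition \ref{f5}); moreover $y_{1}$ is the unique vertex of maximum degree in $F_{G}$, hence fixed by every automorphism, and a short distance argument from $y_{1}$ then shows the whole path $y_{1}y_{2}\cdots y_{N}$ is fixed pointwise. Because $g(u_{i})=g(u'_{i})=y_{i}$ with all $y_{i}$ now fixed, each $u'_{i}$ is determined (it is the degree-$2$ neighbour of $y_{i}$ whose other neighbour $u_{i}$ has degree $3$ and meets the high-degree vertex $x_{1}$), hence so are $u_{i}$ and $x_{1}$, and then the path $x_{1}x_{2}\cdots x_{N}$ is fixed pointwise by an induction along its edges. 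Finally, for $i\le 3t^{2}-4t$ the vertex $v_{i}$ is the unique stem-neighbour of $y_{1}$ adjacent to the already-fixed $x_{i}$, so those stems are rigid, leaving interchangeable only the stems $y_{1}v_{i}v'_{i}$ with $i\ge 3t^{2}-4t+1$, whose non-root vertices carry no $g$-edge. This bookkeeping is routine but genuinely delicate, since one must keep careful track of which cross-edges lie near each vertex, and it is precisely here that the values $3t^{2}-4t$ and $4(t-1)^{2}+1$ — chosen so that their difference is $(t-2)^{2}+1$ — play their role.
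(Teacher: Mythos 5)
Your proposal is correct and follows essentially the same route as the paper: the paper's argument is exactly the construction in the paragraph preceding the lemma (the path $P_{4(t-1)^2+1}$ with $4(t-1)^2+1$ twin stems at $x_1$, the same $g$, giving $Dist(G)=2t-1$ and $Dist(F_G)=t-1$), which you reproduce and merely verify in more detail. Your added degree/distance analysis of $\Gamma(F_G)$ fills in what the paper asserts with "one can see," and your computations ($\psi((2t-2)^2+1)=2t-1$, $\psi((t-2)^2+1)=t-1$, and $N-(3t^2-4t)=(t-2)^2+1$) check out.
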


\section{The distinguishing number of functigraphs of some families of graphs}

In this section, we discuss the distinguishing number of
functigraphs on complete graphs, edge deletion graphs of complete
graph and join of path graphs.

Let $G$ be the complete graph of order $n\ge3$ and $A$ and $B$ be
its two copies. We use following terminology for $F_G$ in proof of
Theorem \ref{f13}: Let $I=\{v_1,v_2,...,v_s\}$ and $n_i=|\{u\in A:
g(u)= v_i\}|$ for all $i$, where $1\le i \le s$. Also, let
$l=\mbox{max}\{n_i: 1\le i \le s\}$ and $m=|\{n_i: n_i=1, 1\le i \le
s \}|$. From the definitions of $l$ and $m$, we note that $2\le l
\le n-s+1$ and $0\le m \le s-1$.

Using function $\psi(m)$ as defined in previous section, we have
following lemma:
\begin{Lemma}\label{CompleteFunctiIdentity}
Let $G$ be the complete graph of order $n\ge 3$ and $g$ be a
bijective function, then $Dist(F_G)=\psi(n)$.
\end{Lemma}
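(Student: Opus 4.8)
The plan is to first pin down $F_G$ and the group $\Gamma(F_G)$, and then prove the two inequalities $Dist(F_G)\ge\psi(n)$ and $Dist(F_G)\le\psi(n)$ separately. Since $g$ is bijective, the cross edges $\{uv:g(u)=v\}$ form a perfect matching between $A$ and $B$, so after relabelling $G_2$ we may take $A=\{u_1,\dots,u_n\}$, $B=\{v_1,\dots,v_n\}$ with the matching equal to $\{u_iv_i:1\le i\le n\}$; in particular $F_G$ does not depend on the choice of bijection $g$, being two copies of $K_n$ joined by this matching. I would then determine $\Gamma(F_G)$: for $n\ge 3$ the vertices $u_i$ and $v_i$ have no common neighbour, hence every clique of size $\ge 3$ lies inside $A$ or inside $B$, so $A$ and $B$ are the only two cliques of size $n$. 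Consequently every automorphism preserves the pair $\{A,B\}$, and the matching forces the induced permutations on $A$ and on $B$ to be the same; thus $\Gamma(F_G)$ consists exactly of the maps $u_i\mapsto u_{\sigma(i)},\,v_i\mapsto v_{\sigma(i)}$ and the maps $u_i\mapsto v_{\sigma(i)},\,v_i\mapsto u_{\sigma(i)}$ over all permutations $\sigma$ of $\{1,\dots,n\}$. Write $\tau$ for the \emph{flip} that interchanges the two copies.

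For the lower bound, associate with the $i$-th matched pair the ordered colour pair $(f(u_i),f(v_i))$ of a labelling $f$. If $f$ uses at most $\psi(n)-1$ colours, there are at most $(\psi(n)-1)^2$ possible ordered pairs, and $(\psi(n)-1)^2<n$ by definition of $\psi$; so by pigeonhole two columns $i\ne j$ receive identical ordered pairs, and the automorphism $u_i\leftrightarrow u_j,\,v_i\leftrightarrow v_j$ preserves $f$. Hence $Dist(F_G)\ge\psi(n)$.

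For the upper bound, put $k=\psi(n)$, so $n\le 2\binom{k}{2}+k=k^2$. I would label the $n$ columns by $n$ \emph{distinct} ordered pairs $P_1,\dots,P_n\in\{1,\dots,k\}^2$, colouring $u_i$ and $v_i$ by the two coordinates of $P_i$. Distinctness already kills every non-identity automorphism of the first kind. An automorphism of the second kind (a column permutation followed by $\tau$) preserves $f$ only if, for every $i$, the column sent to column $i$ carries the coordinate-reversal of $P_i$; so it suffices to pick the $P_i$ so that $\{P_1,\dots,P_n\}$ is \emph{not} closed under reversing coordinates, e.g. include $(1,2)$ and omit $(2,1)$. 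With such a choice no $\tau$-automorphism survives, so $f$ is $k$-distinguishing and $Dist(F_G)\le\psi(n)$.

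The step I expect to be the real obstacle is this last one: one must break the flip-type symmetries while keeping all $n$ column-colourings distinct. Choosing the $P_i$ asymmetric under coordinate-reversal is immediate when $n<k^2$, but when $n$ equals $k^2=2\binom{k}{2}+k$ one is forced to use every element of $\{1,\dots,k\}^2$, a set that \emph{is} closed under reversal; so this part of the argument really uses $n\le k^2-1$, and the equality case $n=2\binom{k}{2}+k$ must be examined separately. Reconciling the distinctness constraint with the flip is where the proof needs the most care.
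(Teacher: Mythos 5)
Your determination of $\Gamma(F_G)$, your pigeonhole lower bound, and your reduction of the upper bound to choosing $n$ distinct ordered colour pairs whose set is not closed under coordinate reversal are all correct, and this is genuinely more careful than the paper's argument: the paper simply transplants the labelling of equations (\ref{array1})--(\ref{array2}) from Lemma \ref{LemmaTwinStem} and asserts it is distinguishing, never addressing the automorphisms that exchange the two copies of $K_n$. (In fact the paper's own labelling is not distinguishing even for $n=3$: it assigns the pairs $(1,1),(1,2),(2,1)$, a set closed under reversal, so the flip composed with the transposition of the second and third matched pairs preserves it; the value $\psi(3)=2$ is nonetheless correct, via a choice of pairs such as $(1,1),(2,2),(1,2)$ as in your construction.)

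The obstacle you flag at $n=\psi(n)^2$ is not a gap you failed to close; it is a counterexample to the lemma as stated. If $n=k^2$ with $k=\psi(n)$, then any $k$-labelling killing the column permutations must use all $k^2$ ordered pairs, each exactly once, and the full set $\{1,\dots,k\}^2$ \emph{is} closed under reversal, so the flip composed with the column permutation induced by reversal preserves every such labelling. Hence $Dist(F_{K_4})=3\neq 2=\psi(4)$, and in general $Dist(F_{K_{k^2}})=k+1=\psi(k^2)+1$ when $g$ is bijective (the upper bound $k+1$ follows from your construction, since $k^2\le(k+1)^2-1$). So the correct statement is $Dist(F_G)=\psi(n)$ when $n$ is not a perfect square and $\psi(n)+1$ when it is; your proof establishes the former completely, and the equality case cannot be ``examined separately'' and salvaged in the form claimed by the paper, whose proof overlooks exactly the copy-exchanging symmetry you isolate.
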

\begin{proof}
Let $A=\{u_1,u_2,...,u_n\}$ and $I=\{g(u_1),g(u_2),...,g(u_n)\}=B$.
Also let $k=\psi(n)$. Let $f:V(F_G)\rightarrow \{1,2,...,k\}$ be a
labeling in which $f(u_i)$ is defined as in equation (\ref{array1})
and $f(g(u_i))$ as in equation (\ref{array2}) in proof of Lemma
\ref{LemmaTwinStem}. Using this labeling one can see that $f$ is a
$k$-distinguishing labeling for $F_G$. Since permutation with
repetition of $k$ colors, when 2 of them are taken at a time is
equal to $2{k\choose 2}+k$, therefore at least $k$ colors are needed
to label the vertices in $F_G$. Hence, $k$ is the least integer for
which $F_G$ has $k$-distinguishing labeling.
\end{proof}
Let $G$ be a complete graph and let $g:A\rightarrow B$ be a function
such that $2\le m\le s$. Without loss of generality assume
$u_1,u_2,...,u_m\in A$ are those vertices of $A$ such that
$g(u_i)\ne g (u_j)$ where $1\le i\ne j \le m$ in $B$. Also
$(u_iu_j)(g(u_i)g(u_j))\in \Gamma (F_G)$ for all $i\ne j$ where
$1\le i, j \le m$. By using similar labeling $f$ as defined in Lemma
\ref{CompleteFunctiIdentity}, at least $\psi(m)$ color are needed to
break these automorphism in $F_G$. Thus, we have following
proposition:
\begin{Proposition}\label{PropSi(m)}
Let $G$ be a complete graph of order $n\ge 3$ and $g$ be a function
such that $2\le m\le s$, then $Dist(F_G)\ge \psi(m)$.
\end{Proposition}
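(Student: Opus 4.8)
The plan is to show that the $m$ vertices of $A$ whose $g$-images have a unique preimage generate, inside $\Gamma(F_G)$, a copy of $S_m$ acting exactly the way the symmetric group acts on a bunch of twin stems, and then to invoke the same counting argument used in Lemma~\ref{LemmaTwinStem} and Lemma~\ref{CompleteFunctiIdentity} to conclude that at least $\psi(m)$ colors are unavoidable.

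First I would fix notation. Relabel so that $v_1,\dots,v_m$ are precisely the elements of $I$ with $n_1=\dots=n_m=1$, and let $u_i\in A$ be the unique vertex with $g(u_i)=v_i$ for $1\le i\le m$; by the definition of $m$ these $u_i$ are pairwise distinct and $g(u_i)\ne g(u_j)$ for $i\ne j$. For $1\le i<j\le m$ set
$$\tau_{ij}=(u_i\,u_j)\,(v_i\,v_j),$$
the permutation of $V(F_G)$ that swaps $u_i\leftrightarrow u_j$ and $v_i\leftrightarrow v_j$ and fixes all other vertices. The next step is to check that $\tau_{ij}\in\Gamma(F_G)$. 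Edges inside $A$ and inside $B$ are preserved because $G_1$ and $G_2$ are complete, so every permutation of $A$ (resp.\ of $B$) is an automorphism of $<A>$ (resp.\ $<B>$). The only other edges are the $g$-edges $\{u\,g(u):u\in A\}$: here $\tau_{ij}$ sends $u_i v_i\mapsto u_j v_j$ and $u_j v_j\mapsto u_i v_i$, both $g$-edges, and fixes $u\,g(u)$ whenever $u\notin\{u_i,u_j\}$ and $g(u)\notin\{v_i,v_j\}$. The one thing that could go wrong is a $g$-edge $u\,v_i$ with $u\ne u_i$, or $u\,v_j$ with $u\ne u_j$; but $n_i=n_j=1$ says exactly that $v_i$ and $v_j$ have unique $g$-preimages, so no such edge exists. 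Hence $\tau_{ij}$ maps $E(F_G)$ onto itself. (The $\tau_{ij}$ then generate a subgroup of $\Gamma(F_G)$ isomorphic to $S_m$, with $\tau_{ij}$ corresponding to the transposition $(i\,j)$.)

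Now I would extract the bound. Let $f$ be any distinguishing labeling of $F_G$ with $k$ colors, and write $p_i=(f(u_i),f(v_i))$ for the ordered color-pair of the $i$-th pair. Since $\tau_{ij}$ moves only $u_i,u_j,v_i,v_j$, it fails to preserve $f$ if and only if $p_i\ne p_j$; as $\tau_{ij}$ is a non-trivial automorphism, this must hold for all $i\ne j$, so $p_1,\dots,p_m$ are pairwise distinct. The number of ordered pairs of colors available is $k^2=2\binom{k}{2}+k$, whence $m\le 2\binom{k}{2}+k$, and by the definition of $\psi$ this forces $k\ge\psi(m)$. Since $f$ was arbitrary, $Dist(F_G)\ge\psi(m)$. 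This is the exact analogue of the twin-stem count in Lemma~\ref{LemmaTwinStem}, with the pair $\{u_i,v_i\}$ playing the role of a stem $xu_iu'_i$; and in the special case $m=s$ one has $g$ bijective, where Lemma~\ref{CompleteFunctiIdentity} shows the bound is actually attained.

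The main obstacle is the automorphism verification in the second step: one must be careful that transposing $v_i$ and $v_j$ inside $B$ does not inadvertently create or destroy a mixed $A$–$B$ edge, and the only reason it does not is that each of $v_i,v_j$ has a single $g$-preimage — this is exactly the point at which the hypothesis $2\le m$ and the defining property of the $m$ chosen indices are used. The other place that needs a careful sentence rather than a hand-wave is making the ``without loss of generality'' relabeling precise, so that it is genuinely the singleton-fibre vertices (and their unique preimages) that are being swapped by $\tau_{ij}$.
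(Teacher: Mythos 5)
Your proposal is correct and follows essentially the same route as the paper: it isolates the $m$ vertices whose images under $g$ have singleton fibres, observes that the pair-swaps $(u_i\,u_j)(g(u_i)\,g(u_j))$ lie in $\Gamma(F_G)$, and counts ordered color pairs ($k^2=2\binom{k}{2}+k$) to force $k\ge\psi(m)$. Your write-up is in fact more careful than the paper's brief sketch, which asserts these automorphisms without the edge-preservation check and appeals to the labeling of Lemma~\ref{CompleteFunctiIdentity} rather than stating the pair-distinctness argument explicitly.
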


The following result gives the distinguishing number of functigraphs
of complete graphs.

\begin{Theorem}\label{f13} Let $G=K_{n}$ be the complete graph of order $n\geq
3$, and let $1<s\leq n-1$, then
$$Dist(F_{G})\in\{n-s,n-s+1,\psi(m)\}.$$
\end{Theorem}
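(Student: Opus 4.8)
The plan is to analyze the structure of $\Gamma(F_G)$ when $G=K_n$ and split into cases according to whether the function $g$ is injective, and if not, how the fibers $g^{-1}(v_i)$ are distributed. Since $G_1=\langle A\rangle$ and $G_2=\langle B\rangle$ are both complete graphs, an automorphism of $F_G$ either fixes $A$ and $B$ setwise or (in rare situations) interchanges them; the latter can only occur when $g$ is a bijection, and that case is already fully handled by Lemma \ref{CompleteFunctiIdentity}, giving $Dist(F_G)=\psi(n)$. Since we are told $1<s\le n-1$, the map $g$ is \emph{not} bijective, so every automorphism of $F_G$ preserves $A$ and $B$. The cross edges $\{uv:g(u)=v\}$ then force any such automorphism to permute the fibers $\{g^{-1}(v_i):1\le i\le s\}$ among fibers of the same cardinality, while permuting $B$ compatibly (fixing $I$ setwise and fixing every vertex of $B\setminus I$, since vertices of $B\setminus I$ have degree $n-1$ inside $G_2$ only and no cross edge, whereas vertices of $I$ have strictly larger degree). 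So effectively $\Gamma(F_G)$ is a direct product of symmetric groups acting on the fibers of equal size, together with independent permutations inside each fiber.

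Next I would count how many colors are needed to kill this group. Inside a single fiber of size $n_i$ attached to the common neighbor $v_i\in B$, the situation is exactly that of $n_i$ adjacent twins (they all share $N[\,\cdot\,]$ inside $A\cup\{v_i\}$), so breaking the symmetric group $S_{n_i}$ requires $n_i$ distinct color patterns — but here each such vertex is a single vertex, so we just need $n_i$ distinct colors on that fiber, hence $l=\max n_i$ colors are needed for the largest fiber. Separately, the $m$ singleton fibers behave like the twin-stem configuration of Lemma \ref{LemmaTwinStem}: the pairs $(u_i,g(u_i))$ for the $m$ singleton fibers can be swapped in pairs, so by Proposition \ref{PropSi(m)} we need at least $\psi(m)$ colors to break those swaps. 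Combining, $Dist(F_G)\ge\max\{l,\psi(m)\}$, and one checks $l\le n-s+1$ with equality iff one fiber has size $n-s+1$ and all others are singletons, i.e. $l=n-s+1$ and $m=s-1$; more generally the "$l$ large fibers" contribution contributes either $n-s$ or $n-s+1$ depending on whether the extremal fiber is attained. The upper bound direction is to exhibit an explicit labeling with the claimed number of colors: color $B\setminus I$ arbitrarily (say all $1$), give the vertices of each fiber distinct colors using $l$ colors to break within-fiber symmetry, color $I$ so that no two fibers of equal size receive the same "profile", and when $m\ge 2$ overlay the Lemma \ref{LemmaTwinStem}-style labeling on the singleton fibers; a short case check shows $\max\{n-s,n-s+1,\psi(m)\}$ colors suffice and that the value actually realized is one of $n-s$, $n-s+1$, $\psi(m)$.

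The main obstacle will be the bookkeeping that pins the answer down to \emph{exactly} one of the three listed values rather than just $\max$ of them: one must verify that when $\psi(m)\le n-s$ the $n-s$ colors already used for the fibers can be recycled to also break the singleton-swap automorphisms (so the answer is $n-s$ or $n-s+1$, not larger), and conversely that when $\psi(m)>n-s+1$ the $\psi(m)$ colors needed for the singletons suffice to also separate all the larger fibers and label $B\setminus I$, so the total stays at $\psi(m)$. I expect this to require splitting into subcases by the value of $l\in\{n-s,n-s+1\}$ (equivalently, whether some fiber of size $\ge 2$ forces the $(n-s+1)$st color) and comparing with $\psi(m)$, using the elementary bounds $2\le l\le n-s+1$ and $0\le m\le s-1$ noted before the theorem. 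Once the group structure of $\Gamma(F_G)$ is correctly identified, each subcase reduces to a finite, routine comparison, so the hard work is organizational rather than deep.
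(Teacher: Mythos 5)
There is a genuine gap in your identification of $\Gamma(F_G)$, and it is located exactly where the values $n-s$ and $n-s+1$ in the statement come from. Your degree argument correctly shows that no automorphism can exchange a vertex of $I$ with a vertex of $B\setminus I$, but it does not show that the vertices of $B\setminus I$ are fixed pointwise. On the contrary, since $G_2=K_n$ and no vertex of $B\setminus I$ receives a cross edge, every vertex $w\in B\setminus I$ has $N[w]=B$ in $F_G$; hence $B\setminus I$ is a twin set of size $n-s$, and the full symmetric group on these $n-s$ vertices sits inside $\Gamma(F_G)$. This forces $Dist(F_G)\ge n-s$ regardless of the fiber sizes, which is precisely the content of the paper's cases with $l\le n-s$ (e.g.\ when $l<n-s$ the answer is $n-s$ because $B\setminus I$ is then the largest twin set). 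Your claimed lower bound $\max\{l,\psi(m)\}$ misses this contribution: take, say, $n=10$, $s=3$ with fiber sizes $(6,2,2)$, where $l=6$, $m=0$, but $Dist(F_G)=n-s=7$. The error propagates into your upper-bound plan, since coloring all of $B\setminus I$ with the single color $1$ leaves every permutation of $B\setminus I$ as a color-preserving automorphism, so the proposed labeling is not distinguishing; and it also propagates into your final case split, which restricts attention to $l\in\{n-s,n-s+1\}$ and omits $2\le l<n-s$, exactly the range in which the twin set $B\setminus I$ decides the answer.

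Apart from this, your reading of the $A$-side structure is essentially the paper's: each fiber $g^{-1}(v_i)$ is a twin set in $A$ (contributing $l\le n-s+1$ colors, with $l=n-s+1$ giving the value $n-s+1$), the $m$ singleton fibers paired with their distinct images give the $\psi(m)$ contribution via Proposition \ref{PropSi(m)}, and the non-bijectivity of $g$ (from $s\le n-1$) rules out the flip. The repair is to add the twin set $B\setminus I$ of size $n-s$ to your list of symmetry sources, replace the lower bound by (essentially) $\max\{l,\,n-s,\,\psi(m)\}$, give $B\setminus I$ pairwise distinct colors in the labeling, and then run the comparison of $l$ against $n-s$ and $\psi(m)$ as the paper does in its five cases.
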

\begin{proof}
We discuss following cases for $l$:
\begin{enumerate}
\item  If $l=n-s+1>2$, then $A$ contains $n-s+1$ twin vertices and
$B$ contains $n-s$ twin vertices (except for $n=3,4$ where $B$
contains no twin vertices). Also, there are $m(=s-1)$ vertices in
$A$ which have distinct images in $B$. These $m$ vertices and their
distinct images are labeled by at least $\psi(m)$ colors (only 1
color if $m=1$) by Proposition \ref{PropSi(m)}. Since $n-s+1$ is the
largest among $n-s+1$, $n-s$ and $\psi(m)$. Thus, $n-s+1$ is the
least number such that $F_G$ has $(n-s+1)$- distinguishing labeling.
Thus, $Dist(F_G)=n-s+1$.

\item If $l=n-s+1=2$, then $\psi(m)\ge \mbox{max}\{n-s+1, n-s\}$, and hence
$Dist(F_G)=\psi (m)$.

\item If $l<n-s$, then $B$
contains largest set of $n-s$ twin vertices in $F_G$. Also, there
are $m (\le s-2)$ vertices in $A$ each of which has distinct image
in $B$. Since $n-s\geq\psi(m)$, therefore $Dist(F_G)=n-s$.

\item If $l=n-s>2$, then both $A$ and $B$ contain
largest set of $n-s$ twin vertices in $F_G$. Also, there are
$m(=s-2)$ vertices in $A$ which have distinct images in $B$. Since
$n-s\geq\psi(m)$, therefore $Dist(F_G)=n-s$.

\item If $l=n-s=2$, then we take two subcases:
    \begin{enumerate}

        \item If $1<s\leq \lfloor \frac{n}{2}\rfloor +1$, then both
            $A$ and $B$ contain largest set of $n-s$ twin vertices in
            $F_G$. Also, there are $m(=s-2)$ vertices in $A$ which
            have distinct images in $B$. Since $n-s\geq\psi(m)$ (if $\psi(m)$ exists), therefore
            $Dist(F_G)=n-s$.
        \item If $\lfloor \frac{n}{2}\rfloor +1< s\leq n-1$, then $\psi(m)\ge \mbox{max}\{n-s+1, n-s\}$, and hence $Dist(F_G)=\psi(m)$.

    \end{enumerate}
\end{enumerate}
\end{proof}
Let $e^{\ast}$ be an edge of a connected graph $G$. Let
$G-ie^{\ast}$ is the graph obtained by deleting $i$ edges from graph
$G.$ A vertex $v$ of a graph $G$ is called \emph{saturated} if it is
adjacent to all other vertices of $G$.

 We define a function $\phi:
\mathbb{N} \rightarrow \mathbb{N}\setminus \{1\}$ as $\phi (i)=k$,
where $k$ is the least number such that $i\le {k\choose2}$. For
instance, $\phi(32)=9$. Note that $\phi$ is well defined.

\begin{Theorem}\label{f1122} Let $G$ be the complete graph of order $n\geq 5$ and $G_i=
G-ie^{\ast}$ for all $i$ where $1\leq i\leq\lfloor
\frac{n}{2}\rfloor$ and $e^{\ast}$ joins two saturated vertices of
the graph $G$. If $g$ is a constant function, then
$$Dist(F_{G_i})=max\{n-2i,\phi(i)\}.$$
\end{Theorem}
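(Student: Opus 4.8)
The plan is to reduce the statement to a computation of $Dist(G_i)$ and then carry that out by a twin-set analysis. The first observation concerns the structure of $G_i$: since each deleted edge $e^{\ast}$ must join two vertices that are saturated at the moment it is removed, and deleting $e^{\ast}=xy$ destroys the saturation of $x$ and $y$, the $i$ deleted edges form a matching; hence $G_i$ is $K_n$ with a perfect matching on $2i$ of its vertices removed. Thus $G_i$ has $n-2i$ saturated vertices $c_1,\dots,c_{n-2i}$ of degree $n-1$, together with $i$ ``missing-edge'' pairs $\{a_j,b_j\}$ ($1\le j\le i$), where $a_j$ and $b_j$ are non-adjacent and $N(a_j)=N(b_j)=V(G_i)\setminus\{a_j,b_j\}$, so each is a non-adjacent twin of the other. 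For $n\ge 5$ any two vertices of $G_i$ have a common neighbour, so $G_i$ is connected, and Lemma~\ref{Lemma g constant} applies: $Dist(F_{G_i})=Dist(G_i)$. It therefore suffices to prove $Dist(G_i)=\max\{n-2i,\phi(i)\}$.

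Next I would pin down the symmetries a distinguishing labelling must destroy. A degree count forces every automorphism of $G_i$ to fix $\{c_1,\dots,c_{n-2i}\}$ setwise and the set of matched vertices setwise, acting on the latter as an automorphism of the deleted matching; hence $\Gamma(G_i)$ is generated by arbitrary permutations of the $c_j$, arbitrary permutations of the $i$ pairs $\{a_j,b_j\}$ as blocks, and the transpositions $(a_j\,b_j)$ within pairs. In particular $(c_p\,c_q)$, $(a_j\,b_j)$, and the ``pair swap'' that interchanges $\{a_j,b_j\}$ with $\{a_{j'},b_{j'}\}$ along equal colours are all automorphisms. Since no automorphism can move a saturated vertex to a matched one, the colour constraints split into two independent blocks that share only the palette, which is why the target value is a maximum.

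For the upper bound I would take $k=\max\{n-2i,\phi(i)\}$, colour $c_1,\dots,c_{n-2i}$ with the distinct colours $1,\dots,n-2i$ (possible since $n-2i\le k$), and — using ${k\choose 2}\ge{\phi(i)\choose 2}\ge i$ — assign to the $i$ pairs $i$ distinct unordered colour-pairs $\{x_j,y_j\}$ with $x_j\ne y_j$, say $f(a_j)=x_j$ and $f(b_j)=y_j$. Any colour-preserving automorphism must fix each $c_j$, cannot permute the pairs (their unordered signatures are pairwise distinct), and cannot apply any $(a_j\,b_j)$ (as $x_j\ne y_j$), so it is trivial and $Dist(G_i)\le k$. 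For the lower bound, the twin-set $\{c_1,\dots,c_{n-2i}\}$ already gives $Dist(G_i)\ge n-2i$ by Proposition~\ref{rem3.3111}, so it remains to show $Dist(G_i)\ge\phi(i)$. Suppose $f$ is a distinguishing labelling with $t<\phi(i)$ colours; then ${t\choose 2}<i$. If $f(a_j)=f(b_j)$ for some $j$, then $(a_j\,b_j)$ preserves $f$, a contradiction. Otherwise each of the $i$ pairs carries one of the ${t\choose 2}$ two-element colour signatures, so two pairs share a signature and the corresponding pair swap preserves $f$ — again a contradiction. Hence $Dist(G_i)\ge\phi(i)$, and combining the bounds with the reduction yields $Dist(F_{G_i})=\max\{n-2i,\phi(i)\}$.

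The only step requiring real care is the second half of the lower bound, where one must cleanly separate a pair whose two vertices carry the same colour (killed by an internal transposition) from a pair with a genuine $2$-subset signature (killed by a pigeonhole among the ${t\choose 2}$ possible signatures and the resulting colour-preserving pair swap). The remaining ingredients — the reduction via Lemma~\ref{Lemma g constant}, the degree bookkeeping for $\Gamma(G_i)$, the explicit labelling, and the degenerate cases $n-2i\in\{0,1\}$ (where the formula collapses to $\phi(i)\ge 2$) and $i=\lfloor n/2\rfloor$ — are routine and need only be verified.
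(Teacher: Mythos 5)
Your proposal is correct and follows essentially the same route as the paper: identify $G_i$ as $K_n$ minus a matching, show the $n-2i$ saturated vertices force $n-2i$ colors and the $i$ twin pairs force $\phi(i)$ colors via distinct unordered color pairs, combine by a maximum, and reduce $Dist(F_{G_i})$ to $Dist(G_i)$ through Lemma~\ref{Lemma g constant}. You simply make explicit what the paper leaves implicit (the structure of $\Gamma(G_i)$, the pigeonhole/pair-swap lower bound, and the connectivity needed to invoke the lemma), which is a sound elaboration rather than a different argument.
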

\begin{proof}
On deleting $i$ edges $e^*$ from $G$, we have $n-2i$ saturated
vertices and $i$ twin sets each of cardinality two. We will now show
that exactly $\phi(i)$ colors are required to label vertices of all
$i$ twin sets. We observe that, a vertex in a twin set can be mapped
on any one vertex in any other twin set. 
  Since two vertices
  in a twin set are labeled by a unique pair of colors out of $k\choose 2$ pairs of
  $k$ colors, therefore at least
 $k$ colors are required to label vertices of $i$ twin sets. Now, we discuss the following two cases for $\phi(i)$:
 \begin{enumerate}
 \item If $\phi(i)\le n-2i$, then number of colors required to label $n-2i$
  saturated vertices is greater than or equal to number of colors required to
  label vertices of $i$ twin sets. Thus, we label $n-2i$ saturated vertices
  with exactly $n-2i$ colors and out of these $n-2i$ colors, $\phi(i)$ colors will be used
  to label vertices of $i$ twin sets.
 \item If $\phi(i)> n-2i$, then number of colors required to label $n-2i$
  saturated vertices is less than the number of colors required to
  label vertices of $i$ twin sets. Thus, we label vertices
  of $i$ twin sets with $\phi(i)$ colors and out of these $\phi(i)$ colors, $n-2i$ colors will be used
  to label saturated vertices in $G_i$.
 \end{enumerate}

If $g$ is constant, then by using same arguments as in the proof of
Lemma \ref{Lemma g constant}, $Dist(F_{G_i})=Dist(G_i).$
\end{proof}


Suppose that $G=(V_{1},E_{1})$ and $G^*=(V_{2},E_{2})$ be two graphs
with disjoint vertex sets $V_{1}$ and $V_{2}$ and disjoint edge sets
$E_{1}$ and $E_{2}$. The \emph{join} of $G$ and $G^*$ is the graph
$G+G^*$, in which $V(G+G^*)=V_{1}\cup V_{2}$ and $E(G+G^*)=E_{1}\cup
E_{2}\cup \{$ $uv$: $u\in V_1$, $v\in V_2\}$.

%
%
%

\begin{Theorem} \cite{Alikhani} Let $G$ and $G^*$ be two connected
graphs, then $Dist(G+G^*)\geq max\{Dist(G), Dist(G^*)\}.$
\end{Theorem}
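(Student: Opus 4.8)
The plan is to exhibit, for any $t$-distinguishing labeling of $G+G^{*}$, a $\max\{Dist(G),Dist(G^{*})\}$-distinguishing labeling of one of $G$ or $G^{*}$, thereby forcing $t\geq\max\{Dist(G),Dist(G^{*})\}$. First I would recall the structure of the automorphism group of a join: when $G$ and $G^{*}$ are connected (and not both trivial), every automorphism of $G+G^{*}$ either stabilizes the vertex set $V_1$ of $G$ and the vertex set $V_2$ of $G^{*}$ setwise, or else swaps them — and the latter can only happen when $G\cong G^{*}$. In the first (generic) case, the restriction map $\sigma\mapsto(\sigma|_{V_1},\sigma|_{V_2})$ embeds the stabilizer of $\{V_1,V_2\}$ into $\Gamma(G)\times\Gamma(G^{*})$, and conversely every pair $(\alpha,\beta)\in\Gamma(G)\times\Gamma(G^{*})$ extends to an automorphism of $G+G^{*}$ because all the cross-edges $\{uv:u\in V_1,v\in V_2\}$ are present and hence preserved automatically.

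Given that, the core step is short. Take a $t$-distinguishing labeling $f$ of $G+G^{*}$ and let $f_1=f|_{V_1}$. I claim $f_1$ is a distinguishing labeling of $G$: if some nontrivial $\alpha\in\Gamma(G)$ preserved $f_1$, then the automorphism $\hat\alpha$ of $G+G^{*}$ acting as $\alpha$ on $V_1$ and as the identity on $V_2$ is a nontrivial automorphism preserving $f$, contradicting that $f$ is $t$-distinguishing. Hence $Dist(G)\leq t$, and symmetrically $Dist(G^{*})\leq t$, so $t\geq\max\{Dist(G),Dist(G^{*})\}$. Taking $t=Dist(G+G^{*})$ gives the inequality.

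The only subtlety — and the place I'd be most careful — is the degenerate cases that could make $\Gamma(G+G^{*})$ fail to contain a faithful copy of $\Gamma(G)\times\Gamma(G^{*})$, namely when a "mixing" automorphism swaps $V_1$ and $V_2$ (possible only if $G\cong G^{*}$) or when $G$ or $G^{*}$ is the trivial graph $K_1$. But the single extra automorphism coming from a swap does not obstruct the argument: extending $\alpha\in\Gamma(G)$ by the identity on $V_2$ still lies in $\Gamma(G+G^{*})$ regardless of whether extra swap-type automorphisms exist, so the restriction argument above goes through verbatim. For the trivial-graph case one checks directly that $Dist(K_1+G^{*})\geq Dist(G^{*})$ since adding one universal vertex can only shrink the automorphism group restricted to $V_2$; I would handle this as a one-line remark. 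Since the statement as quoted assumes $G,G^{*}$ connected, the main case is all that is really needed, and the proof is essentially the extension-by-identity observation plus the standard description of $\Gamma(G+G^{*})$.
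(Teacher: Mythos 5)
Your argument is correct; note that the paper itself gives no proof of this statement (it is quoted from \cite{Alikhani}), and your restriction argument---extend a nontrivial label-preserving $\alpha\in\Gamma(G)$ by the identity on $V_2$ to a nontrivial label-preserving automorphism of $G+G^*$, which shows that the restriction of any distinguishing labeling of $G+G^*$ to $V_1$ (resp.\ $V_2$) is distinguishing for $G$ (resp.\ $G^*$)---is exactly the standard proof of this fact. The opening discussion of the full structure of $\Gamma(G+G^*)$ and of the swap case $G\cong G^*$ is superfluous, as you yourself observe: the only ingredient actually used is that every pair $(\alpha,\beta)\in\Gamma(G)\times\Gamma(G^*)$ extends to an automorphism of the join, since all cross edges are present.
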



\begin{Proposition}
Let $P_n$ be a path graph of order $n\ge 2$, then for all $m,n\ge 2$
and $1<s<m+n$, $1\le Dist(F_{P_m+P_n})\le 3$.
\end{Proposition}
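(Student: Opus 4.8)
The plan is to exhibit a $3$-distinguishing labeling of $F_{P_m+P_n}$ for every admissible choice of $m$, $n$ and $g$, which together with the trivial bound $1\le Dist(F_{P_m+P_n})$ gives the claim. First I would record the structure of the base graph $H=P_m+P_n$: its automorphism group is small. Writing $P_m=a_1a_2\cdots a_m$ and $P_n=b_1b_2\cdots b_n$, every automorphism of $H$ either fixes the two sides setwise (and then acts as a path-reversal on each of $P_m$ and $P_n$, independently) or, when $m=n$, may additionally swap the two sides. So $|\Gamma(H)|\le 8$, and in fact $Dist(H)\le 2$: label $a_1$ and $b_1$ with color $1$ and everything else with color $2$ when $m\neq n$; when $m=n$ use a third ``anchor'' such as coloring $a_1$ with $1$, $b_1$ with $2$ (say), all other $a_i$ and $b_j$ with, to be safe, color $3$, to also kill the side-swap. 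This is why the statement allows $3$ colors rather than $2$.

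Next I would pass from $H$ to $F_{H}$. By the construction in Proposition~\ref{f5}, extend a distinguishing labeling of $H$ to $G_1$ and then copy it onto $G_2$ via $f(u_i)=f(v_i)$; the only automorphisms of $F_H$ not already broken are those that flip $G_1$ and $G_2$, and these survive only when $g$ is bijective and the flip is actually an automorphism of $F_H$. Since we have a third color in hand, I would use it on a single vertex of $G_1$ that is not the image under any such flip of a vertex with the same color — for instance, recolor one endpoint of the $P_m$-copy inside $G_1$ with color $3$ while its partner in $G_2$ keeps color $2$. One then checks case by case (the cases being: $g$ constant — handled already by Lemma~\ref{Lemma g constant}, giving $Dist(F_H)=Dist(H)\le 2$; $g$ non-bijective non-constant — Proposition~\ref{f5}(1) gives $\le Dist(H)\le 2$; $g$ bijective — $\le Dist(H)+1\le 3$) that $3$ colors always suffice.

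The main obstacle I anticipate is the bijective case when additionally $m=n$: there the ``flip'' maps the $P_m$-side of $G_1$ either to the $P_m$-side or to the $P_n$-side of $G_2$, and one must verify that the asymmetric placement of the single color-$3$ vertex, together with the copied $\{1,2\}$-labeling, leaves no automorphism of $F_H$ invariant — including compositions of a side-swap on each copy with the $G_1\leftrightarrow G_2$ flip and with the cross-edge permutation $g$. I would handle this by noting that an invariant automorphism must preserve the unique color-$3$ vertex, hence fix the $G_1$-side and the relevant endpoint, and then the remaining freedom is a product of path-reversals which is killed by the position of the color-$1$ anchors exactly as in the computation of $Dist(H)$. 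The bookkeeping is routine but needs to be spelled out for each of the (finitely many) shapes of $g$ that make the flip an automorphism; no new idea is required beyond combining Proposition~\ref{f5} with the third color.
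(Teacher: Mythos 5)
There is a genuine gap, and it sits exactly in the cases that make the statement say ``$3$'' rather than ``$2$''. Your structural claim about $H=P_m+P_n$ --- that every automorphism preserves the two sides (acting by path reversals) or, when $m=n$, swaps them, so $|\Gamma(H)|\le 8$ and $Dist(H)\le 2$ --- is false when $m$ or $n$ lies in $\{2,3\}$ and the situation is degenerate, namely for $(m,n)\in\{(2,2),(2,3),(3,3)\}$. Joining with $P_2$ or $P_3$ creates saturated vertices, and saturated vertices are mutually adjacent twins that automorphisms may permute arbitrarily across the two sides: $P_2+P_2=K_4$ has $Dist(K_4)=4$; $P_2+P_3$ has a twin set of size $3$ (both vertices of $P_2$ together with the centre of $P_3$), so $Dist(P_2+P_3)=3$, and your proposed $2$-labeling (color $1$ on $a_1,b_1$, color $2$ elsewhere) is preserved by the transposition of $a_2$ with the centre of $P_3$; similarly $Dist(P_3+P_3)=3$ (the transposition of the two centres and the swap of the two pairs of end vertices are automorphisms). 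So the third color is not needed to kill a side swap when $m=n$ --- for $m=n\ge 4$ two colors already distinguish $H$, as the paper's explicit labeling in its case (4) shows --- it is needed because of these three small joins with large twin sets, which your argument misdiagnoses.

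The consequences are concrete. The hypothesis $1<s<m+n$ forces $g$ to be neither constant nor bijective, so your entire discussion of the $G_1\leftrightarrow G_2$ flip in the bijective case is vacuous here; what remains of your argument is Proposition \ref{f5}, case (1), giving $Dist(F_H)\le Dist(H)$. With the corrected values this still yields the bound $3$ for $(m,n)=(2,3)$ and $(3,3)$ (indeed more cleanly than the paper, which only asserts a case check there), but for $(m,n)=(2,2)$ it gives only $Dist(F_{K_4})\le 4$, and your proposal supplies nothing stronger. The paper closes that case by invoking its Theorem \ref{f13} on functigraphs of complete graphs, which for $1<s\le 3$ gives $Dist(F_{K_4})\le 3$; some separate argument of this kind for $K_4$ is indispensable, and your write-up must also replace the false claim $Dist(P_m+P_n)\le 2$ by the correct dichotomy ($=2$ for $m\ge 2$, $n\ge 4$; $=3$ for $(2,3)$ and $(3,3)$; $=4$ for $(2,2)$) before the reduction through Proposition \ref{f5} can be applied. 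For $m\ge 2$, $n\ge 4$ your route coincides with the paper's: exhibit a $2$-distinguishing labeling of $P_m+P_n$ and apply Proposition \ref{f5}.
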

\begin{proof}
Let $P_m: v_1,...,v_m$ and $P_n:u_1,...,u_n$. We discuss following
cases for $m,n$.
\begin{enumerate}
\item If $m=2$ and $n=2$, then $P_2+P_2=K_4$, and hence $1\leq Dist(F_{K_4})\le 3$
 by Theorem \ref{f13}.
\item If $m=2$ and $n=3$, then $P_2+P_3$ has 3 saturated
 vertices. Thus, $1\le Dist(F_{P_2+P_3})\le 4$ by Proposition
 \ref{f5}. However, for all $s$ where $2 \le s \le 4$ and all possible definitions of $g$
 in $F_{P_2+P_3}$, one can see $1 \le Dist(F_{P_2+P_3}) \le 3$.

\item If $m=3$ and $n=3$, then a labeling $f:V(P_3+P_3)\rightarrow \{1,2,3\}$ defined as:

$$f(x)=\left\{
\begin{array}{ll}
1\,\,\,\,\,\,\,\,\,\,\,\,\,\,if \,\,\,\,\,\,\,\,\,\,\,\,\, x=v_1,v_2\\
2\,\,\,\,\,\,\,\,\,\,\,\,\,\,if\,\,\,\,\,\,\,\,\,\,\,\,\, x= v_3,u_3\\
3\,\,\,\,\,\,\,\,\,\,\,\,\,\,if\,\,\,\,\,\,\,\,\,\,\,\,\, x= u_1,u_2\\
\end{array} \right.$$

is a distinguishing labeling for $P_3+P_3$, and hence
$Dist(P_3+P_3)= 3$. Thus, $1\le Dist(F_{P_3+P_3})\le 4$ by
Proposition \ref{f5}. However, for all $s$ where $2 \le s \le 5$ and
all possible definitions of $g$ in $F_{P_3+P_3}$, one can see $1 \le
Dist(F_{P_3+P_3}) \le 3$.

\item If $m\ge 2$ and $n\ge 4$, then a labeling $f:V(P_m+P_n)\rightarrow \{1,2\}$ defined as:

$$f(x)=\left\{
\begin{array}{ll}
1\,\,\,\,\,\,\,\,\,\,\,\,\,\,if \,\,\,\,\,\,\,\,\,\,\,\,\, x=v_1,u_2,...,u_n\\
2\,\,\,\,\,\,\,\,\,\,\,\,\,\,if\,\,\,\,\,\,\,\,\,\,\,\,\, x=u_1,
v_2,...,v_m &
\end{array} \right.$$

is a distinguishing labeling for $P_m+P_n$, and hence
$Dist(P_m+P_n)= 2$. Thus, result follows by Proposition \ref{f5}.

\end{enumerate}


\end{proof}


\end{document}